\titleformat{\section}{\centering\Large\bfseries}{\arabic{section}.}{1em}{}
\titleformat{\subsection}{\centering\large\bfseries}{\arabic{section}.\arabic{subsection}.}{1em}{}
\def\@evenhead{\thepage \hfill \shortauthor \hfill}
\def\@oddfoot{}
\def\@evenfoot{}
\newtheorem{lemma}{{\sc Lemma}}[section]
\newtheorem{corollary}[lemma]{{\sc Corollary}}
\newtheorem{proposition}[lemma]{{\sc Proposition}}
\newtheorem{thm}{{\sc Theorem}}
\newtheorem{theorem}{{\sc Theorem}}
\newtheorem{remark}[lemma]{{\sc Remark}}
\numberwithin{equation}{section}
\def\d{\partial}
\def\N{\mathcal N}
\def\g{\mathfrak{g}}
\def\wt{\widetilde}
\def\Aut {{\rm Aut\,}}
\def\Der {{\rm Der\,}}
\def\Ad {{\rm Ad\,}}
\begin{document}
\title{The nilpotent variety and invariant polynomial functions in the Hamiltonian algebra}
\author{Junyan Wei}
\date{}
%
%
%

\maketitle

\begin{abstract}
Premet has conjectured that the nilpotent variety of any finite-dimensional restricted Lie algebra is an irreducible variety.
In this paper, we prove this conjecture in the case of Hamiltonian Lie algebra.
and show that its nilpotent variety is normal and a complete intersection.
In addition, we generalize the Chevalley Restriction theorem to Hamiltonian Lie algebra. Accordingly, we give the generators of the invariant polynomial ring.
\end{abstract}

\vspace{0.2cm}
\renewcommand{\thethm}{\Alph{thm}}
\renewcommand{\thefigure}{\arabic{section}-\arabic{figure}}
\renewcommand{\theconjecture}{\hspace{-0.08cm}}
\section{Introduction}
In this paper we continue the study of nilpotent variety and invariant polynomial functions of restricted Lie algebras after the work \cite{WCL}.
The original motivation for the irreduciblity of nilpotent variety comes from Premet's conjecture.
Namely, for any finite-dimensional restricted Lie algebra $\g$ over an algebraically closed field $k$ of characteristic $p>0$, the variety $\N(\g)$ of nilpotent elements is irreducible~\cite[Conjecture\,1]{P1}.
Restricted Lie algebras are analogs of algebraic Lie algebras of characteristic zero.
It is well known that the variety of nilpotent elements in classical Lie algebras is irreducible (cf.\cite{J}).
If $\g$ is the Jacobson-Witt algebra $W_n$, Premet \cite{P2} obtained that $\N(W_n)$ is irreducible by identifying $\N(W_n)$ as the closure of one nilpotent orbit. Skryabin \cite{S} proved the irreducibility of $\N(\g)$ in the case where $\g=B_{2r}$ is a Poisson Lie algebra. The Poisson Lie algebra has a center coinciding with the scalars $k$, and the factor algebra $B_{2r}/k$ is isomorphic to a certain Hamiltonian Lie algebra $H_{2r}'$ (see Theorem \ref{main3}) whose commutator subalgebra $[H_{2r}', H_{2r}']=H_{2r}$ is simple.
In \cite{WCL} we solve the case for special Lie algebra $S_n$ by constructing an open dense subset of $\N(S_n)$.
Since a subvariety of an irreducible variety may not be again irreducible, we couldn't get the irreducibility of $\N(H_{2r})$ from $\N(B_{2r})$ directly.

In the present paper we consider the Lie algebra of Cartan type $H$.
Throughout this paper, we always assume that $n=2r$ is an even integer and the characteristic $p>3$.
Our first main result is the following theorem.

\begin{thm}\label{main1}
Premet's Conjecture is true for $H_n$, that is
the nilpotent variety of $H_n$ is irreducible. It is also normal and a complete intersection. The ideal $I_{\N(H_n)}=\{\varphi \in k[H_n] | \varphi(\N(H_n)) = 0\}$
is generated by $\xi_0,\,\ldots,\,\xi_{r-1},$
where the polynomials $\xi_i$ are defined in \eqref{xi_i}.
\end{thm}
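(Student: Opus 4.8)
The plan is to prove the four assertions in the order: (a) a Chevalley-type restriction statement giving $\N(H_n)=V(\xi_0,\dots,\xi_{r-1})$ as a set; (b) irreducibility; (c) the complete-intersection property via a dimension count; (d) normality via Serre's criterion, from which the description of the ideal follows at once.

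First I would record the relevant structure theory. Sitting inside $W_n=\Der(\O_n)$, every element of $H_n$ has a Jordan decomposition $x=x_s+x_n$ with $x_s$ toral, $x_n$ nilpotent, $[x_s,x_n]=0$, both parts again in $H_n$, and $x\in\N(H_n)$ exactly when $x_s=0$; moreover every toral element is $\Aut(H_n)^0$-conjugate into the standard maximal torus $\mathfrak t=\bigoplus_{i=1}^{r}k\,t_i$, where $t_i$ is the image under the Hamiltonian vector-field map of $x_ix_{i+r}$, so $\dim\mathfrak t=r$. The polynomials $\xi_i$ of \eqref{xi_i} are built to be $\Aut(H_n)$-invariant and to depend only on $x_s$, i.e. $\xi_i(x)=\xi_i(x_s)$; their restrictions $\xi_i|_{\mathfrak t}$ generate the ring of invariants of the finite Weyl group acting on $\mathfrak t$, and the common zero locus of these restrictions in $\mathfrak t$ is $\{0\}$. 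Combining these facts yields $\xi_i(x)=0$ for all $i$ $\iff$ $x_s=0$ $\iff$ $x\in\N(H_n)$; the nontrivial half is the surjectivity statement (every $\Aut(H_n)$-invariant polynomial is a polynomial in the $\xi_i$), which simultaneously establishes the generalized Chevalley Restriction theorem and the generators of $k[H_n]^{\Aut(H_n)}$ promised in the introduction.

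For irreducibility I would follow the method of \cite{WCL} for $S_n$: exhibit an explicit subspace $V\subseteq\N(H_n)$, assembled from the lower graded components of $H_n$ together with a complementary piece chosen so that the action morphism $\Aut(H_n)^0\times V\to\N(H_n)$ has dense image; since the source is irreducible, $\overline{\Aut(H_n)^0\cdot V}$ is an irreducible closed subset, and a dimension comparison shows it is all of $\N(H_n)$. (Equivalently, once normality has been established, $\N(H_n)=V(\xi_0,\dots,\xi_{r-1})$ is the zero set of homogeneous polynomials, hence a cone, hence connected, and a connected normal variety is irreducible.) The dimension count is the assertion $\dim\N(H_n)=\dim H_n-r$: the inequality $\ge$ is Krull's height theorem applied to $V(\xi_0,\dots,\xi_{r-1})$, while $\le$ follows from the explicit family above — equivalently from the fact that over the regular-semisimple locus the map $x\mapsto x_s$ has fibres of the expected dimension, namely the nilpotent variety of the centralizer of a regular torus. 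Since $\N(H_n)$ is then equidimensional of codimension exactly $r$ and is cut out by the $r$ functions $\xi_0,\dots,\xi_{r-1}$, these form a regular sequence in $k[H_n]$, so $V(\xi_0,\dots,\xi_{r-1})$ is a set-theoretic complete intersection and in particular Cohen–Macaulay.

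The last step is normality, and it is the main obstacle. A complete intersection automatically satisfies Serre's condition $S_2$, so by Serre's criterion it suffices to verify $R_1$: the locus where the Jacobian matrix $\bigl(\partial\xi_i/\partial y_j\bigr)_{0\le i\le r-1}$ has rank $<r$ meets $\N(H_n)$ in codimension $\ge 2$. I would first compute $d\xi_0,\dots,d\xi_{r-1}$ at a regular nilpotent element and check their linear independence there, which gives generic reducedness, and then stratify the remaining nilpotent elements — by the type of the nilpotent $\Aut(H_n)^0$-orbit, or by the top graded degree of a representative — and bound, stratum by stratum, the dimension of the locus where the rank drops. This hands-on estimate, carried out with the explicit graded structure of $H_n$ and the explicit shape of the $\xi_i$, is the delicate part, because $\Aut(H_n)$ has infinitely many orbits on $\N(H_n)$ and the local geometry is not uniform. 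Once $R_1+S_2$ is in place, $V(\xi_0,\dots,\xi_{r-1})$ is reduced and normal; reducedness identifies it scheme-theoretically with $\N(H_n)$, whence $I_{\N(H_n)}=\sqrt{(\xi_0,\dots,\xi_{r-1})}=(\xi_0,\dots,\xi_{r-1})$, and $\N(H_n)$ is irreducible (a connected normal cone), normal, and a complete intersection, as claimed.
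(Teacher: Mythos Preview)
Your overall architecture — identify $\N(H_n)$ set-theoretically as $V(\xi_0,\dots,\xi_{r-1})$, get the complete-intersection property from a codimension count, and deduce normality and irreducibility via Serre's $R_1+S_2$ — is correct in spirit and is in fact what underlies the paper's argument. But the paper packages it far more efficiently through a ready-made criterion of Skryabin (Lemma~\ref{main-lemma} here, \cite[Lemma~1.5]{S}): if $\N\subset L$ is the zero locus of homogeneous $\tau_1,\dots,\tau_m$, if the $d\tau_i$ are linearly independent at every point of some open $M\subset L$, and if there exists a linear subspace $E\subset L$ with $\dim E\ge m+2$ and $E\cap\N\subset\{0\}\cup M$, then $\N$ is irreducible, normal, a complete intersection, and $I_\N=(\tau_1,\dots,\tau_m)$. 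Everything is thereby reduced to two concrete checks: that $d\xi_0,\dots,d\xi_{r-1}$ are linearly independent on the open set $V$ of \eqref{V} (Lemma~\ref{linear-independent}), and that the $(r+2)$-dimensional subspace $E=\langle D_H(u),\,D_H(v),\,D_H(x_ix_{r+i})\mid 1\le i\le r\rangle$ satisfies $E\cap\N(H_n)\subset\{0\}\cup V$, which follows from computations already done in \cite{S}.

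The subspace-$E$ device is the idea missing from your proposal, and without it your hardest step — the $R_1$ verification — is not actually carried out. You propose to stratify $\N(H_n)$ by nilpotent orbit type and bound the bad Jacobian locus stratum by stratum, but you yourself flag this as ``the delicate part'' and give no mechanism for handling infinitely many orbits; nor do you execute the separate ``dense $\Aut(H_n)^0\cdot V$'' argument for irreducibility (which, incidentally, is \emph{not} the route taken here, despite its use in \cite{WCL} for $S_n$). Skryabin's lemma sidesteps all of this: a single $(r+2)$-dimensional $E$ meeting $\N$ only inside $\{0\}\cup M$ already forces the singular locus to have codimension $\ge 2$, without any appeal to orbit structure. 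One smaller point: the set-theoretic equality $\N(H_n)=V(\xi_0,\dots,\xi_{r-1})$ is easier than you make it — since $\xi_i^{p^r}=\psi_{r+i}|_{H_n}$ and $\psi_i|_{H_n}=0$ for $i<r$ (Lemma~\ref{nil}), the $\xi_i$ vanish exactly when the characteristic polynomial of $D$ on $B_n$ is $t^{p^n}$; you do not need Jordan decomposition or the Chevalley restriction theorem (Theorem~\ref{main2}) here, and in the paper's logic Theorem~\ref{main2} is proved afterwards and independently.
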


The Chevalley restriction theorem
for the reductive Lie algebra was firstly generalized by Premet \cite{P2} for Cartan type, which is
\begin{equation}\label{iniso}
k[W_n]^{\Aut(W_n)} \cong k[T_0]^{{\rm GL}_n(\mathbb{F}_p)},
\end{equation}
where $T_0=\langle (1+x_1)\d/\d x_1,\cdots,(1+x_n)\d/\d x_n\rangle$ is a
torus of $W_n$.
In \cite{WCL}, we generalized the Chevalley restriction theorem to the special Lie algebra $S_n$.
We proved it by showing that there is an injective between $k[S_n]^{\Aut(S_n)}$ and $k[W_{n-1}]^{\Aut(W_{n-1})}$.
For the type H case, the authors in \cite{BFS} have already had that this restriction map is surjective.
In this paper, we will improve their result by proving that it is an isomorphism.
The method is the same as in \cite{WCL}.

Let
$$T_H=\langle (1+x_{r+1})\d_{r+1}-x_1\d_1,\cdots,(1+x_{r+2})\d_{r+2}-x_2\d_2,\cdots,(1+x_{2r})\d_{2r}-x_r\d_r \rangle$$
be a generic torus introduced \cite{BFS} in $H_n$. Our second main theorem is:
\begin{thm}\label{main2}
  The restriction map $r_1:\,k[H_n]^{\Aut(H_n)}\longrightarrow\,k[T_H]^{{\rm GL}_r(\mathbb{F}_p)}$ is an isomorphism. In particular, $k[H_n]^{\Aut(H_n)}$ is generated by $\xi_0,\xi_1,\cdots,\xi_{r-1}.$
\end{thm}

The paper is organized as follows. We first give a brief description of the
Hamiltonian algebra and the Poisson Lie algebra and their relations.
In section 2 and 3, we will prove Theorems \ref{main1} and \ref{main2} respectively.

\section{The Hamiltonian algebra}
In this section, we will give a brief description of the Hamiltonian Lie algebra.

Let $B_n=k[x_1,\cdots,x_n]/\langle x_1^p,\cdots,x_n^p \rangle$
be the truncated polynomial ring.  Let $\mathfrak{m}$ be the unique maximal ideal of $B_n$. The derivation algebra $W_n =\Der B_n$ is
a Lie algebra called the Jacobson-Witt algebra.  It is well-known that
this is a simple restricted Lie algebra and its
$p$-map is the standard $p$-th power of linear operators. Denote by $\d_i = \d/\d x_i \in W_n$ the partial derivative with respect to the variable $x_i$. Then $\{\d_1,\ldots,\d_n\}$ is a basis of the $B_n$-module $W_n$.

Consider the following differential form
$$\omega_H=\sum\limits_{i=1}^r dx_i\wedge dx_{i+r}.$$
Define $H_n^{''}=\{D\in W_n\,|\,D(\omega_H)=0\},$ which is clearly a subalgebra of $W_n$. For any $f\in B_n$, it can be easily checked that
\begin{equation}\label{dh}
D_H(f):=\sum\limits_{i=1}^r\big(\d_i(f)\d_{i+r}-\d_{i+r}(f)\d_i\big)
\end{equation}
indeed sends $\omega_H$ to 0. Define
$$[f,g]:=D_H(f)(g),~\forall~f,g\in B_n.$$
The multiplication $[\,,\,]$  makes $B_n$ into a Lie algebra, which is called a Poisson algebra. Accordingly, the map $D_H:B_n \longrightarrow W_n$  sending $f\in B_n$ to $D_H(f)$ is a Lie homomorphism.

Let $H_n'$ denote the image of $B_n$ under $D_H$. It is immediate thereof that the Lie center $\{f \in B_n \,|\, [f,B_n]=0\}$ coincides
with $k$, and $H_n'$ is isomorphic to $B_n/k$.
Put $H_n=H_n'^{(1)}$ be the derived algebra of $H_n'$.
 The following Theorem gives the property of $H_n$.

\begin{theorem}[\cite{SF}]\label{main3}
(1) $H_n=\langle D_H(x^{\textbf{a}})\,|\,0\leq  \textbf{a} < \tau \rangle$ and
$H_n'=\langle D_H(x^{\textbf{a}}) \rangle.$ Here $x^{\textbf{a}}=x_1^{a_1}\cdots x_n^{a_n}$ is the usual notation in $B_n$ and $\tau=(p-1,\cdots,p-1).$

(2) $H_n$ is simple and restricted, and its dimension is $p^n-2$.
\end{theorem}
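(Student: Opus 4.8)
The plan is to handle the two assertions in turn and to read off the dimension from part~(1).

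For part~(1), the point is to make $D_H$ transparent. Since $D_H(f)=0$ forces $\d_i f=0$ for every $i$, and in the truncated ring $B_n$ that means $f\in k$, the map $D_H$ identifies $H_n'$ with $B_n/k$; hence $H_n'=\langle D_H(x^{\mathbf a})\rangle$ (the sum over all multi-indices) and $\dim H_n'=p^n-1$. Because $D_H$ is a Lie homomorphism, $H_n=(H_n')^{(1)}=D_H(P)$ where $P\subseteq B_n$ is the linear span of all Poisson brackets $[f,g]$, so everything reduces to pinning down $P$. From the definition one gets, for monomials,
$$[x^{\mathbf a},x^{\mathbf b}]=\sum_{i=1}^{r}\bigl(a_ib_{i+r}-a_{i+r}b_i\bigr)\,x^{\mathbf a+\mathbf b-e_i-e_{i+r}}.$$
I would then prove $P=\langle x^{\mathbf c}\,:\,\mathbf c\ne\tau\rangle$. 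For ``$\supseteq$'', taking $f$ to be a coordinate gives $[x_i,x^{\mathbf b}]=b_{i+r}x^{\mathbf b-e_{i+r}}$ and $[x_{i+r},x^{\mathbf b}]=-b_ix^{\mathbf b-e_i}$ for $1\le i\le r$; so for any $\mathbf c\ne\tau$ one picks a coordinate $j$ with $c_j\le p-2$ and writes $x^{\mathbf c}$, up to the unit $c_j+1\in k$, as such a bracket with $x^{\mathbf c+e_j}$. For ``$\subseteq$'', the $i$-th summand above contributes a nonzero multiple of $x^{\tau}$ only if $\mathbf a+\mathbf b=\tau+e_i+e_{i+r}$, which forces $a_i+b_i\equiv a_{i+r}+b_{i+r}\equiv 0\pmod p$, and then the coefficient $a_ib_{i+r}-a_{i+r}b_i\equiv a_i(-a_{i+r})-a_{i+r}(-a_i)=0$ in $k$; hence no Poisson bracket has a nonzero $x^{\tau}$-component, so $x^{\tau}\notin P$. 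Since $1=[x_1,x_{1+r}]\in P$ and $\ker D_H=k$, we conclude $H_n=D_H(P)=\langle D_H(x^{\mathbf a})\,:\,\mathbf a\ne \mathbf 0,\,\tau\rangle$, of dimension $p^n-2$.

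For part~(2), restrictedness comes from the ambient picture: $W_n$ acts on differential forms by the (restricted) Lie derivative, so $D\,\omega_H=0$ implies $D^{[p]}\omega_H=0$; thus $H_n''$ is a restricted subalgebra of $W_n$, and one checks that $H_n'$ and then $H_n=(H_n')^{(1)}$ are ideals of $H_n''$ on which the $p$-operation restricts (alternatively, verify directly on the basis that $D_H(x^{\mathbf a})^{p}\in H_n$). For simplicity I would run the classical graded Lie algebra argument. Grade $H_n=\bigoplus_{j\ge-1}H_n[j]$ by $\deg D_H(x^{\mathbf a})=|\mathbf a|-2$. Then $H_n[-1]=\langle\d_1,\dots,\d_n\rangle$, $H_n[0]\cong\mathfrak{sp}_{2r}$ acting on $H_n[-1]$ as the standard module (quadratic vector fields preserving $\omega_H$), and the grading is transitive: if $x=D_H(f)\in H_n[j]$ with $j\ge 0$ satisfies $[x,H_n[-1]]=0$, then $\d_k f\in k$ for all $k$, forcing $f$ affine and $x=0$. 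Moreover $H_n$ is generated by its part of degrees $-1,0,1$, with $H_n[0]=[H_n[-1],H_n[1]]$ and $H_n[1]=[H_n[0],H_n[1]]$. Now let $I\ne 0$ be an ideal. A short argument with the standard filtration of $H_n$ lets one assume $I$ is graded; taking the least $d$ with $I\cap H_n[d]\ne 0$ and a nonzero $u$ there, transitivity forces $d=-1$ (otherwise $0\ne[u,H_n[-1]]\subseteq I\cap H_n[d-1]$); irreducibility of $H_n[-1]$ under $H_n[0]\cong\mathfrak{sp}_{2r}$ (simple for $p>3$) then gives $H_n[-1]\subseteq I$, whence $H_n[0]=[H_n[-1],H_n[1]]\subseteq I$ and $H_n[1]=[H_n[0],H_n[1]]\subseteq I$, so $I$ contains the generating local part and $I=H_n$.

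The computations in part~(1) are routine; the one thing to watch is the mod-$p$ cancellation that keeps $x^{\tau}$ out of $P$. The substantive part is the simplicity argument in part~(2) --- specifically, reducing an arbitrary nonzero ideal to a graded one and then forcing it to meet $H_n[-1]$, i.e. the transitivity/irreducibility package for the graded Hamiltonian algebra. This is exactly the content of the classical structure theory, so in the write-up I would either invoke \cite{SF} directly or include the brief graded-ideal argument above, with restrictedness falling out of the realization of $H_n$ inside the restricted algebra $H_n''$.
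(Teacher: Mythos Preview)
The paper does not prove this theorem at all: it is quoted verbatim from \cite{SF} and used as background. So there is no ``paper's own proof'' to compare against---your proposal is a self-contained sketch of the classical argument that \cite{SF} carries out in detail.

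Your sketch is essentially correct and follows the standard route. A couple of points are worth tightening if you write it out rather than cite \cite{SF}. First, in the restrictedness step, being an ideal of the restricted algebra $H_n''$ does not by itself force closure under the $p$-map; you do need the direct check you mention parenthetically (or the observation that $H_n$ is centerless once simple, so any $p$-map it admits is unique, and then exhibit one on generators). Second, the phrase ``a short argument with the standard filtration lets one assume $I$ is graded'' deserves one line of justification: pass to the associated graded ideal $\mathrm{gr}\,I=\bigoplus_j (I\cap L_{(j)}+L_{(j+1)})/L_{(j+1)}$, prove every nonzero graded ideal equals $H_n$, and then recover $I=H_n$ by downward induction on the filtration. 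Finally, the generation claims $H_n[0]=[H_n[-1],H_n[1]]$, $H_n[1]=[H_n[0],H_n[1]]$, and ``$H_n$ is generated in degrees $-1,0,1$'' are true for $p>3$ but are exactly the places where one is really invoking the structure theory from \cite{SF}; in a full proof they need either a short direct verification (the first two are easy, the third is the substantive one) or a reference.

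Your Part~(1) computation, including the mod~$p$ cancellation showing $x^{\tau}\notin P$, is clean and correct.
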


Note that the map $D_H$ is linear, so every element in $H_n$ has the form $D_H(f)$ for some $f\in B_n.$

Moreover, there is a $p$-map on $B_n$ which is compatible with the $p$-map on $H_n'$ and satisfying $1^{[p]}=0$ and $f^{[p]}\in \mathfrak{m}^2$ for all $f\in \mathfrak{m}$. This kind of $p$-map is called normalized. As every two normalized $p$-maps on $B_n$ are conjugate. Thus, we fix a normalized $[p]$-structure on $B_n$. Given $f\in B_n$, we define $f^{[1]}=f$ and inductively, $f^{[p^i]}=(f^{[p^{i-1}]})^{[p]}$ for $i > 0$.

Denote by
\begin{equation}\label{defofkappa}
\kappa :B_n\longrightarrow k
\end{equation} the homomorphism of associative algebras with kernel $\mathfrak{m}$.
For each integer $a \geq 0$,
Denote by $a_0, a_1, a_2, \ldots $ the coefficients in the $p$-adic
expansion
\begin{equation}\label{defofa_i}
a=\sum_{i\geq 0}a_ip^i,\qquad \text{with~}0 \leq a_i <p.
\end{equation}
Let
\begin{eqnarray*}
f^{<a>}&=&\prod\limits_{i\geq 0}\big(f^{[p^i]}-\kappa(f^{[p^i]})\big)^{a_i}\in B_n\\
f^{[a]}&=&\prod\limits_{i\geq 0}(f^{[p^i]})^{a_i}\in B_n
\end{eqnarray*}

From \cite[Propositions\,3.2,\,3.5]{S}, there exist uniquely determined homogeneous polynomial functions $\widetilde {\varphi}_1,\ldots,\widetilde{\varphi}_{p^r-1}$ (resp.~ $\varphi_1,\ldots,\varphi_{p^r-1}$) in $k[B_n]$ such that
\begin{equation}\label{invariant}
f^{<p^r>}+\sum\limits_{a=1}^{p^r-1}\widetilde{\varphi}_a(f)f^{<p^r-a>}=0\quad\big(resp.\quad
f^{[p^r]}+\sum\limits_{a=1}^{p^r-1}\varphi_a(f)f^{[p^r-a]}=0\big).
\end{equation}
And there is also a relation between the polynomial functions $\widetilde{\varphi_i}$ and $\varphi_i$:
\begin{equation}\label{relation}
\widetilde{\varphi}_a(f)=\sum_{b=1}^a(-1)^{a-b}{{a-1}\choose{b-1}}\kappa(f^{[a-b]})
\cdot\varphi_b(f)~~\forall f\in B_n~\text{and}~ 0<a <p^n.
\end{equation}

\section{The nilpotent variety of $H_n$}
In this section, we will show that the nilpotent variety of $H_n$ is irreducible and normal and a complete intersection.
We first will give the invariant polynomial functions whose zero set is  $\N(H_n)$ and introduce an open subset in $H_n$.

Consider $B_n$ as the natural $W_n$-module and let's denote by $\chi_D(t)$ the characteristic polynomial of $D\in W_n$ as a linear transformation of $B_n$. It was proved by Premet \cite{P2} that,
 $$\chi_D(t)=t^{p^n}+\sum\limits_{i=0}^{n-1}\psi_i(D)t^{p^i},$$
where each $\psi_i$ is a homogeneous polynomial function on $W_n$.

\begin{lemma}[{\cite[Lemma\,6.3]{S}}]\label{nil}
If $f\in B_n$, then $\psi_i(D_H(f))=0$ and $\psi_{r+i}(D_H(f))=\widetilde \varphi_{p^r-p^i}(f)^{p^r}$ for all $0\leq i< r,$ where $\wt\varphi_{p^r-p^i}$'s
are those polynomial functions in \eqref{invariant}.
\end{lemma}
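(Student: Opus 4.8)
The plan is to compute the characteristic polynomial of $D_H(f)$ acting on $B_n$ by relating it to known quantities attached to $f$ in the Poisson algebra, and then to match coefficients. First I would recall that for $D\in W_n$ the characteristic polynomial has the special additive shape $\chi_D(t)=t^{p^n}+\sum_{i=0}^{n-1}\psi_i(D)t^{p^i}$, and that for the Hamiltonian derivation $D=D_H(f)$ the operator is the Poisson bracket $\mathrm{ad}_f=[f,-]$ on $B_n$. The key structural input is that $D_H(f)$ preserves a good filtration / symplectic structure: because $D_H(f)$ annihilates $\omega_H$ it is a ``Hamiltonian'' vector field, so its action on $B_n$ is, up to the normalized $p$-map, governed by the one-variable datum $f$ exactly as in Skryabin's analysis of the Poisson algebra $B_{2r}$. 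Concretely, the identities in \eqref{invariant} say that $f^{<p^r>}$ (resp.\ $f^{[p^r]}$) satisfies a $p$-polynomial relation over $k$ whose coefficients are the $\widetilde\varphi_a(f)$ (resp.\ $\varphi_a(f)$), and these encode the minimal $p$-polynomial of $\mathrm{ad}_f$ restricted to the subalgebra generated by $f$.

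The main step is to show that the characteristic polynomial of $\mathrm{ad}_f$ on all of $B_n$ is the $p^r$-th power of the characteristic polynomial of $\mathrm{ad}_f$ on the ``half'' determined by $f$ together with its $p$-th powers. I would argue this by choosing, generically, coordinates adapted to $f$: after a Hamiltonian change of variables one may take $f$ to play the role of a coordinate function (say $f \equiv x_1$ up to the center and lower-order terms), so that $B_n$ becomes a free module over the subring $k[x_1,x_{r+1}]/(x_1^p,x_{r+1}^p)$-type piece on which $\mathrm{ad}_f$ acts, and the remaining $r-1$ symplectic pairs of variables each contribute an identical factor. This multiplicativity forces $\chi_{D_H(f)}(t) = \big(t^{p^r}+\sum_{i}\widetilde\varphi_{p^r-p^i}(f)\,t^{p^i}+\cdots\big)^{p^r}$-type expression; comparing with $t^{p^n}+\sum_{i=0}^{n-1}\psi_i(D_H(f))t^{p^i}$ and using $p^n=p^{2r}$, the exponents $p^0,\dots,p^{r-1}$ appear only with coefficient $0$ (hence $\psi_i(D_H(f))=0$ for $0\le i<r$), while the exponents $p^{r},\dots,p^{2r-1}$ pick up the $p^r$-th powers of the coefficients of the lower polynomial, giving $\psi_{r+i}(D_H(f))=\widetilde\varphi_{p^r-p^i}(f)^{p^r}$. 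Since all the $\psi_j$, $\widetilde\varphi_a$ are polynomial functions and the identity has been checked on a dense subset (generic $f$), it holds identically.

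The hard part will be making the ``adapted coordinates / multiplicativity'' step rigorous in prime characteristic with the truncation $x_i^p=0$: one cannot literally diagonalize, and the normalized $p$-map interferes with naive substitutions, so the factorization of $\chi_{D_H(f)}$ must be obtained from the $p$-polynomial identity \eqref{invariant} itself rather than from an eigenspace decomposition. I expect the cleanest route is to reduce to a generic regular semisimple $f$ lying in a torus (as in \cite{S}, \cite{BFS}), where $\mathrm{ad}_f$ is semisimple with eigenvalues that can be read off from the $p$-adic combinatorics encoded by $f^{[p^i]}$ and $\kappa$, verify the two displayed formulas there by a direct eigenvalue count, and then invoke density and the polynomiality of both sides (together with relation \eqref{relation} to keep track of the $\kappa$-corrections) to conclude in general. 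A secondary technical point is bookkeeping the shift in index by $r$ and the Frobenius twist by $p^r$ consistently, which I would pin down by evaluating on a single explicit $f$ such as $f=x_1x_{r+1}$.
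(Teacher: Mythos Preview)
The paper does not prove this lemma at all: it is quoted verbatim as \cite[Lemma\,6.3]{S} and used as a black box, so there is no ``paper's own proof'' to compare against. Your sketch is in the right spirit and is close to what Skryabin actually does in \cite{S}, but you should be aware of two points.

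First, the factorization $\chi_{D_H(f)}(t)=\bigl(t^{p^r}+\sum_{i=0}^{r-1}\widetilde\varphi_{p^r-p^i}(f)\,t^{p^i}\bigr)^{p^r}$ is obtained in \cite{S} not by a change of coordinates or a torus reduction, but directly from the module structure: one checks that the $p$-polynomial $P(t)=t^{p^r}+\sum_i\widetilde\varphi_{p^r-p^i}(f)\,t^{p^i}$ annihilates $D_H(f)$ (this is exactly the content of \eqref{invariant} read as an operator identity via the $p$-map), and then uses that $B_n$ is free of rank $p^r$ over the commutative subalgebra generated by $f-\kappa(f),\,f^{[p]}-\kappa(f^{[p]}),\,\ldots$ to see that the characteristic polynomial is $P(t)^{p^r}$. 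Your proposed detour through generic semisimple $f$ and eigenvalue counting would also work, but it is heavier than necessary and makes the bookkeeping with $\kappa$ and the normalized $p$-map more delicate, as you yourself anticipated.

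Second, your worry about ``adapted coordinates in characteristic $p$ with $x_i^p=0$'' is precisely what Skryabin's machinery of $f^{<a>}$ and the freeness statement in \cite[Propositions\,2.2,\,3.2]{S} is designed to bypass; you do not need to diagonalize anything. So the cleanest fix to your proposal is to replace the coordinate-change/density argument by a direct appeal to the operator identity coming from \eqref{invariant} plus the rank-$p^r$ freeness, and then read off the $\psi_i$ by comparing additive polynomials.
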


Denote by $\phi_i=\widetilde{\varphi}_{p^r-p^i}$ for short.
Recall that the map $D_H:\,B_n \longrightarrow W_n$ is defined by \eqref{dh}.
It induces a surjection from $B_n^{(1)}$ to $H_n$.
Hence there exists a linear map $\delta:\,H_n\longrightarrow\,B_n^{(1)}$
such that $D_H \circ \delta=id_{H_n}.$
As a matter of fact, $\delta$  is given by
$$\delta\big(D_H(f)\big)=f-\kappa(f).$$
Because $\delta$ is linear, its differential satisfies
\begin{equation}\label{differential-of-delta}
(d \delta)_{D_H(f)}\big(D_H(g)\big)=\delta\big(D_H(g)\big)~\text{for~}f,g\in B_n.
\end{equation}

Let
\begin{equation}\label{xi_i}
  \xi_i=\phi_i\circ \delta,\quad \forall ~0\leq i \leq r-1.
\end{equation}
In terms of Lemma \ref{nil}, the polynomial functions $\xi_i$'s
satisfy the identities
$$\xi_i(D)^{p^r}=\Big(\phi_i\big(\delta(D)\big)\Big)^{p^r}=
\psi_{r+i}\Big(D_H\big(\delta(D)\big)\Big)=\psi_{r+i}(D),~\forall~ D\in H_n,~0\leq i <r.$$
Thus $\xi_i^{p^r}=\psi_{r+i}$. Therefore, $\xi_i$ is $\Aut(H_n)$-invariant. The next Lemma is easy to check.
\begin{lemma}
The nilpotent variety  $\N(H_n)$ of $H_n$ is the zero set of polynomial functions $\xi_0,\ldots,\xi_{r-1}$.
\end{lemma}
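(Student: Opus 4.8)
The plan is to reduce the assertion to Premet's characteristic‑polynomial expansion together with Lemma~\ref{nil}. First I would record the description of $\N(H_n)$ that makes everything transparent: since $H_n$ is a restricted subalgebra of $W_n=\Der B_n$ whose $p$‑operation is the ordinary $p$‑th power of operators on $B_n$, an element $D\in H_n$ belongs to $\N(H_n)$ if and only if $D$ is nilpotent as an endomorphism of $B_n$, i.e. if and only if its characteristic polynomial on $B_n$ is $\chi_D(t)=t^{p^n}$. By Premet's expansion $\chi_D(t)=t^{p^n}+\sum_{i=0}^{n-1}\psi_i(D)t^{p^i}$, this is equivalent to $\psi_i(D)=0$ for all $0\le i\le n-1=2r-1$.

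Next I would feed in Lemma~\ref{nil}. Writing $D=D_H(f)$ — recall every element of $H_n$ is of this form — Lemma~\ref{nil} gives $\psi_i(D)=0$ identically for $0\le i<r$, so the first $r$ of the equations above hold automatically on all of $H_n$; and $\psi_{r+i}(D)=\phi_i(f)^{p^r}$ for $0\le i<r$. Combining this with the identity $\xi_i^{p^r}=\psi_{r+i}$ already established (from $\xi_i=\phi_i\circ\delta$ and $D_H\circ\delta=\id_{H_n}$) yields $\psi_{r+i}(D)=\xi_i(D)^{p^r}$ for every $D\in H_n$. Hence $D\in\N(H_n)$ if and only if $\xi_i(D)^{p^r}=0$ for all $0\le i<r$, and, the values being taken in the field $k$, this is the same as $\xi_i(D)=0$ for all $0\le i\le r-1$. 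That is exactly the claimed description of $\N(H_n)$ as the common zero set of $\xi_0,\ldots,\xi_{r-1}$.

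The only step demanding any care — and the one I regard as the main (if minor) obstacle — is the very first: identifying the abstract nilpotent cone $\N(H_n)$ of the restricted Lie algebra $H_n$ with $\{D\in H_n:\ D\ \text{is nilpotent on}\ B_n\}$. This rests on the restricted structure of $H_n$ being the one inherited from $W_n$, equivalently on $H_n$ being closed under $p$‑th powers of operators; this is part of the structure theory of Cartan‑type algebras (\cite{SF}) and is also forced by the uniqueness of the $p$‑map on the centreless Lie algebra $H_n$. Once this identification is in place, the rest is pure bookkeeping with Lemma~\ref{nil} and the relation $\xi_i^{p^r}=\psi_{r+i}$, which is why the lemma is an easy one.
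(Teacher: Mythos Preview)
Your argument is correct and is exactly the route the paper has in mind: right before the lemma the paper derives $\xi_i^{p^r}=\psi_{r+i}$ on $H_n$, and together with Lemma~\ref{nil} (which kills $\psi_0,\ldots,\psi_{r-1}$ on $H_n$) this reduces the condition $\chi_D(t)=t^{p^n}$ to $\xi_0(D)=\cdots=\xi_{r-1}(D)=0$; the paper simply records this as ``easy to check'' rather than writing it out.
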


Denote by
$$U=\{f\in B_n\,|\,f,f^{[p]},\ldots,f^{[p^{r-1}]} \text{are linearly independent modulo~} k+\mathfrak{m}^2 \}.$$
It is due to \cite[Proposition\,2.2]{S} that the subset $U$ is nonempty and Zariski open in $B_n$. Consider the following morphisms
\begin{center}\mbox{}
\xymatrix{
       H_n \ar@{->}[r]^-{\delta} & B_n^{(1)} \ar@{->}[r]^-{\epsilon} & B_n}
\end{center}
where $\epsilon$ is the natural embedding.
Let
\begin{equation}\label{V}
 V=\delta^{-1}\circ \epsilon^{-1}(U).
\end{equation}

Since both $\delta$ and $\epsilon$ are linear, $V$ is open in $H_n$. Let
$u=\sum\limits_{i=1}^{r}(-1)^{i-1}x_i \prod\limits_{i=1}^{i-1} x_{r+j}^{p-1}$.
From the proof of \cite[Proposition\,5.2]{S}, it follows that $u\in U$.
Thus $D_H(u)\in V$, which implies that $V$ is  also non-empty.

We begin to prove Theorem \ref{main1}.
The proof is based on the following two Lemmas. The first one is proved in
\cite[Lemma\,1.5]{S}. The second one will be proved at the end of this section.
\begin{lemma}\label{main-lemma}
Let $\N \subset L$ be the zero set of homogeneous polynomial functions
$\tau_1, \ldots,\tau_m\in k[L]$. Suppose that $M \subset L$ is an open subset and $E \subset L$ a vector subspace such that $E \cap \N \subset \{0\} \cup M$ and $(d\tau)_x,\ldots,(d\tau_m)_x$ are linearly independent
at all points $x \in M$.

{\rm(1)}. If $\dim E \geq m+1$, then $\N$ is a complete intersection of codimension $m$ in $L$ and the ideal $I_{\N}=\{\tau \in k[L] \,|\, \tau(\N) = 0\}$ is generated by $\tau_1,\ldots,\tau_m$.

{\rm(2)}. If $\dim E \geq m+2$, then $\N$ is normal and irreducible.
\end{lemma}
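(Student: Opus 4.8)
The plan is to reduce the statement to Serre's criteria applied to the scheme $X$ with coordinate ring $k[L]/(\tau_1,\ldots,\tau_m)$, whose underlying reduced variety is $\N$, exploiting the homogeneity of the $\tau_i$ together with the subspace $E$ to pin down all the relevant dimensions. The guiding observation is that the zero set of any family of homogeneous polynomials is a \emph{cone}: it is stable under $x\mapsto tx$, hence connected (every point lies on a line through the origin contained in it), and each of its irreducible components is again a closed cone through $0$; likewise, since the defining ideal is graded, the scheme $X$ is stable under the scaling action of $k^\times$.

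First I would show that $\N$ is equidimensional of codimension $m$ and that $\N\cap M$ is dense in $\N$. Let $Z$ be an irreducible component of $\N$. By Krull's Hauptidealsatz, $\dim Z\ge\dim L-m$. Since $Z$ and $E$ are both cones through $0$, the affine dimension theorem gives $\dim(Z\cap E)\ge\dim Z+\dim E-\dim L\ge 1$ once $\dim E\ge m+1$, so $Z\cap E$ contains a point $\neq 0$, which by hypothesis lies in $M$; hence $Z\cap M$ is a non-empty, therefore dense, open subset of $Z$. On $M$ the differentials $(d\tau_1)_x,\ldots,(d\tau_m)_x$ are independent, so by the Jacobian criterion the scheme $X$ (equivalently $\N$) is smooth of pure dimension $\dim L-m$ along $\N\cap M$. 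Combining, $\dim L-m\le\dim Z=\dim(Z\cap M)\le\dim L-m$, so $\dim Z=\dim L-m$ for every component $Z$, and $\N\cap M$ is dense open in $\N$.

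For part (1): since $L\cong\mathbb{A}^{\dim L}$ is Cohen--Macaulay and $V(\tau_1,\ldots,\tau_m)$ has codimension exactly $m$, the unmixedness theorem for complete intersections shows that $X$ is Cohen--Macaulay and that $(\tau_1,\ldots,\tau_m)$ is unmixed of height $m$; in particular $X$ satisfies $S_k$ for all $k$. Moreover $X$ is smooth along the dense open subset $\N\cap M$, so it is generically smooth, i.e.\ satisfies $R_0$. By Serre's criterion $R_0+S_1$, $X$ is reduced, so $(\tau_1,\ldots,\tau_m)$ is a radical ideal; thus $I_{\N}=(\tau_1,\ldots,\tau_m)$ and $\N=X$ is a complete intersection of codimension $m$. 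For part (2), $X$ is now a reduced complete intersection, hence $S_2$. Its singular locus $X_{\mathrm{sing}}$ is disjoint from $M$, and since $X$ is stable under scaling, $X_{\mathrm{sing}}$ is preserved by scaling and, being closed, is a cone; so each irreducible component $W$ of $X_{\mathrm{sing}}$ is a closed cone through $0$ with $W\cap E\subseteq\{0\}\cup M$ while $W\cap M=\emptyset$, forcing $W\cap E=\{0\}$. The affine dimension theorem then yields $\dim W\le\dim L-\dim E\le\dim L-m-2=\dim\N-2$, so $X_{\mathrm{sing}}$ has codimension $\ge 2$, i.e.\ $X$ satisfies $R_1$. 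By Serre's criterion $R_1+S_2$, $X$ is normal; being normal and connected (a cone), it is irreducible. Hence $\N$ is normal and irreducible.

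The only genuinely delicate point is the dimension bookkeeping: at each stage one must check that the relevant irreducible components (of $\N$ in part (1), of $X_{\mathrm{sing}}$ in part (2)) are cones through the origin, so that their intersection with the linear subspace $E$ is automatically non-empty and the affine dimension theorem applies, and one must invoke the correct version of the unmixedness/complete-intersection theorem over the polynomial ring $k[L]$. Everything else is a routine application of Serre's $R_i+S_j$ criteria.
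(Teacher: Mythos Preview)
Your argument is correct. The paper itself does not prove this lemma at all: it simply records that ``the first one is proved in \cite[Lemma\,1.5]{S}'' and moves on. What you have written is a clean, self-contained proof using exactly the standard ingredients (Krull's Hauptidealsatz and the affine dimension theorem for cones to force each component to meet $M$, the Jacobian criterion to get smoothness and the correct dimension there, Cohen--Macaulayness of complete intersections for the $S_k$ conditions, and Serre's $R_0+S_1$ and $R_1+S_2$ criteria). This is essentially the argument one finds in Skryabin's paper, so there is no genuine divergence in approach---you have simply supplied the details that the present paper outsources. The one point worth making explicit when you write it up is why each irreducible component $W$ of $X_{\mathrm{sing}}$ really contains $0$ (so that $W\cap E\neq\emptyset$ and the affine dimension bound applies): this follows because the $k^\times$-action permutes the finitely many components, hence by connectedness fixes each one, and then $0\in\overline{k^\times\cdot x}\subseteq W$ for any $x\in W$.
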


\begin{lemma}\label{linear-independent}
The differentials $d\xi_0,\ldots,d\xi_{r-1}$ are linearly
independent at all $x\in V$, where $\xi_i$'s are defined in \eqref{xi_i}
and $V$ is defined in \eqref{V}.
\end{lemma}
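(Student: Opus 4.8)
The plan is to reduce the linear independence of $d\xi_0,\dots,d\xi_{r-1}$ at a point $D=D_H(f)\in V$ to the linear independence of the differentials of $\phi_0,\dots,\phi_{r-1}$ (equivalently of $\widetilde\varphi_{p^r-1},\widetilde\varphi_{p^r-p},\dots,\widetilde\varphi_{p^r-p^{r-1}}$) on $B_n$ at the point $f\in\epsilon^{-1}(U)$. Since $\xi_i=\phi_i\circ\delta$ and $\delta$ is linear with left inverse $D_H$, the chain rule together with \eqref{differential-of-delta} gives $(d\xi_i)_{D_H(f)}=(d\phi_i)_{\delta(f)}\circ(d\delta)_{D_H(f)}$, and $(d\delta)_{D_H(f)}$ is a linear isomorphism onto $B_n^{(1)}$ (with inverse induced by $D_H$). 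Hence a linear relation $\sum_i c_i(d\xi_i)_{D}=0$ on $H_n$ pulls back to $\sum_i c_i(d\phi_i)_{\delta(f)}=0$ on $B_n^{(1)}$, and since $\phi_i$ factors through $B_n/k$ this is the same as the corresponding relation on $B_n$. So it suffices to prove: for $f$ with $f,f^{[p]},\dots,f^{[p^{r-1}]}$ linearly independent modulo $k+\m^2$, the differentials $(d\widetilde\varphi_{p^r-p^i})_f$, $0\le i<r$, are linearly independent.

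The second and main step is to compute these differentials explicitly. Here I would imitate Skryabin's analysis in \cite{S}: differentiate the defining identity \eqref{invariant}, namely $f^{<p^r>}+\sum_{a=1}^{p^r-1}\widetilde\varphi_a(f)\,f^{<p^r-a>}=0$, with respect to $f$ in a direction $g\in B_n$. Using the product-rule behaviour of the Frobenius-type powers $f\mapsto f^{<p^i>}$ (their differentials involve only the $p^i$-th powers of the differential of $f$, so most terms drop out or become highly degenerate), one extracts that the differential of $\widetilde\varphi_{p^r-p^i}$ at $f$, evaluated at $g$, is, up to a nonzero scalar and a triangular correction by the lower $\widetilde\varphi_a(f)$'s, the coefficient of $f^{<p^i>}$-type terms and is governed by the "leading part" of $g$ against $f^{[p^i]}$. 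Concretely one expects
$$(d\widetilde\varphi_{p^r-p^i})_f(g)\ \equiv\ (\text{unit})\cdot\ell_i(g)\quad (\mathrm{mod}\ \text{span of }\ell_0(g),\dots,\ell_{i-1}(g)),$$
where $\ell_i$ is the linear functional "coefficient of the class of $f^{[p^i]}$ in $B_n/(k+\m^2)$ expressed via the chosen basis", extended by zero appropriately. The hypothesis $f\in U$ says precisely that $\ell_0,\dots,\ell_{r-1}$ are linearly independent functionals on $B_n$, so the triangular system above forces $c_0=\dots=c_{r-1}=0$.

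The hard part will be making the second step rigorous: controlling the differential of the nonlinear maps $f\mapsto f^{<p^r-a>}$ and organizing the resulting identity so that the triangularity in $i$ is visible, since all the $\widetilde\varphi_a(f)$ for $a$ not of the form $p^r-p^i$ also appear and must be shown not to spoil the argument. I would handle this by working modulo the ideal generated by $f^{<p^i>}$ for varying $i$ — equivalently, by evaluating the differentiated identity \eqref{invariant} at carefully chosen test directions $g$ (for instance $g=f^{[p^j]}$ or monomials dual to the chosen basis of $B_n/(k+\m^2)$), which collapses \eqref{invariant} to a manageable scalar identity in the $\widetilde\varphi_a(f)$ and isolates each $(d\widetilde\varphi_{p^r-p^i})_f$ in turn. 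A useful reduction is to note that at a point of $U$ one may, after an automorphism of $B_n$, normalize $f$ so that $f^{[p^i]}\equiv x_{i+1}\ (\mathrm{mod}\ k+\m^2)$ for $0\le i<r$, which makes the functionals $\ell_i$ literally the coordinate functionals and turns the computation into a finite, explicit one; since $\widetilde\varphi$ and $\delta$ are $\Aut$-equivariant this loses no generality. Once the triangular formula for the differentials is established, linear independence at every $x\in V$ is immediate.
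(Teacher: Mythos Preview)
Your first reduction step contains a genuine gap. You correctly observe via the chain rule that $(d\xi_i)_{D_H(f)}=(d\phi_i)_{\delta(D_H(f))}\circ\delta$, so a relation $\sum_i c_i(d\xi_i)_x=0$ on $H_n$ yields $\sum_i c_i(d\phi_i)_{\delta(x)}=0$ on the image of $\delta$. But that image has dimension $\dim H_n=p^n-2$, i.e.\ \emph{codimension two} in $B_n$: it is spanned by the monomials $x^{\mathbf a}$ with $0<\mathbf a<\tau$, and misses both the constants $k$ and the top monomial $x^{\tau}=x_1^{p-1}\cdots x_n^{p-1}$. Your remark that ``$\phi_i$ factors through $B_n/k$'' only recovers the constant direction (indeed $(d\phi_i)_f(\lambda)=0$ since $(f+\lambda)^{\langle a\rangle}=f^{\langle a\rangle}$), so you are still one direction short of a relation on all of $B_n$, and you cannot yet invoke the linear independence of the $(d\phi_i)$ at points of $U$.

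The paper's proof is organised exactly around this missing direction: after the same chain-rule and translation-invariance observations, its main effort is the explicit verification that $(d\phi_i)_{\delta(x)}(x_1^{p-1}\cdots x_n^{p-1})=0$ for each $i$ individually. This is done by expanding $(d\widetilde\varphi_a)_{\delta(x)}$ via the relation \eqref{relation} between $\widetilde\varphi_a$ and $\varphi_b$, and then checking two things: first, that $\kappa(\delta(x)^{[p^r-p^i-b]})\cdot(d\varphi_b)_{\delta(x)}(x^{\tau})=0$ by a $p$-divisibility case split on $b-1$; second, that $D_H(\delta(x))^{p^j-1}(x^{\tau})\in\m$ by a degree count. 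Your proposal does not address this computation at all. Separately, your ``second and main step'' sketches a direct proof of the linear independence of the $(d\widetilde\varphi_{p^r-p^i})_f$ at $f\in U$; this is the result from \cite{S} that the paper simply cites, so while your outline is plausible, it is both unnecessary here and too vague to stand on its own (the triangularity you ``expect'' is not established).
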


\begin{proof}[Proof of Theorem {\rm \ref{main1}}]
According to Lemmas \ref{main-lemma} and \ref{linear-independent},
it suffices to find a vector subspace $E\subseteq H_n$ with
$$\dim E \geq r+2,\quad\text{and}\quad E \cap \N(H_n) \subseteq \{0\}\cup V.$$

Let
$$
u=\sum\limits_{i=1}^{r}(-1)^{i-1}x_{i} \prod\limits_{i=1}^{i-1} x_{r+j}^{p-1};\quad
v=\sum\limits_{i=1}^{r}(-1)^{i-1}x_{r+i} \prod\limits_{i=1}^{i-1} x_{j}^{p-1};
$$
Define
$$E=\langle D_H(u),D_H(v),D_H(x_ix_{r+i})\,|\,i=1,\ldots,n\rangle \subseteq H_n.$$
It is easy to see that $\dim E=r+2$. It remains to prove that
$E\cap \N(H_n) \subseteq \{0\}\cup V.$

Choose any
$$D=\lambda D_H(u)+\mu D_H(v)+D_H(s)=D_H(\lambda u+\mu v +s)\in E,$$
where $s$ is a linear combination of $s_1,\ldots,s_n$.
In terms of the proof of Theorem 6.4 in \cite{S}, we have that
\begin{itemize}
\item[(1)]
If~ $\lambda\neq 0$ or $\mu \neq 0$,~ then ~$\lambda u+s +\mu u\in U$;
\item[(2)]
If ~$\lambda=\mu=0$,~ then~ $D=D_H(s)$~ is~ $[p]$-semisimple.
\end{itemize}
Thus if $D\in E\cap \N(H_n)$, then $D \in V$ or $D=0$, i.e.,
$E\cap \N(H_n) \subseteq \{0\}\cup V.$ Therefore we prove Theorem \ref{main1}.
\end{proof}

\begin{proof}[Proof of Lemma {\rm\ref{linear-independent}}]
Assume that there are some $\lambda_i\in k~(0\leq i \leq r-1)$ such that $\sum\limits_{i=0}^{r-1} \lambda_i (d\xi_i)_x=0$. We claim that
\begin{equation}\label{result}
\sum\limits_{i=0}^{r-1} \lambda_i (d\phi_i)_{\delta(x)}=0 ~\text{on~the~whole~algebra~}B_n.
\end{equation}
Assume that \eqref{result} is true. As $d\phi_i~(0\leq i \leq r-1)$
are linearly independent at $\delta(x)\in U$, it follows that
$\lambda_i=0$ for $0\leq i \leq r-1$. Hence our lemma is proved.
Therefore it is enough to prove \eqref{result}.

By the definition of differential,
\begin{eqnarray*}
(d\xi_i)_x(y)&=&\lim_{t \rightarrow 0}\frac{\xi_i(x+ty)-\xi_i(x)}{t}\\
&=& \lim_{t \rightarrow 0}\frac{\phi_i(\delta(x))+\phi_i(t\delta(y))-\phi_i(\delta(x))}{t}\\
&=&(d\phi_i)_{\delta(x)}(\delta(y))
\end{eqnarray*}
Therefore $(d\xi_i)_x=(d\phi_i)_{\delta(x)}\circ \delta$.

Note that $(f+\lambda)^{<a>}=f^{<a>}$ for any $\lambda\in k$, so $\widetilde{\varphi}_a(f+\lambda)=\widetilde{\varphi}_a(f)$.
Thus $$(d\phi_i)_f(\lambda)=\lim_{t \rightarrow   0}\frac{\phi_i(f+t\lambda)-\phi_i(f)}{t}=0.$$
So
\begin{equation}\label{partly}
  \sum\limits_{i=0}^{r-1} \lambda_i (d\phi_i)_{\delta(x)}\big |_{B_n^{(1)}}=0.
\end{equation}

Note that $\sum\limits_{i=0}^{r-1} \lambda_i (d\phi_i)_{\delta(x)}$ is linear on $B_n$.
To prove \eqref{result}, by \eqref{partly}, it suffices to show that
\begin{equation}\label{final}
\sum\limits_{i=0}^{r-1} \lambda_i (d\phi_i)_{\delta(x)}(x_1^{p-1}\cdots x_n^{p-1})=0.
 \end{equation}
Actually, we will show that for each $0\leq i\leq r-1$,
\begin{equation}\label{final2}
(d\phi_i)_{\delta(x)}(x_1^{p-1}\cdots x_n^{p-1})=0.
\end{equation}

It is easy to see that $(d\kappa)_f=\kappa$ as $\kappa$ is linear. Note that $$(f+tg)^{[a]}=f^{[a]}+t\sum\limits_{\{j\geq 0\,|\, p^j\leq a\}}a_j f^{[a-p^j]}D_H(f)^{p^j-1}(g)+\text{terms~divisible~by~} t^2.$$
Hence by (\ref{relation}), we have that
\begin{equation}\label{final3}
\hspace{-0.4cm}
\begin{aligned}
(d\widetilde{\varphi}_a)_{\delta(x)}(g)=&\sum\limits_{b=1}^a(-1)^{a-b}{{a-1}\choose{b-1}}
\cdot\bigg(\kappa\big({\delta(x)}^{[a-b]}\big)\cdot (d\varphi_b)_{\delta(x)}(g)+\\
&\varphi_b({\delta(x)})\cdot
 \kappa\Big(\sum\limits_{\{j\geq 0\,|\, p^j\leq a-b\}}(a-b)_j {\delta(x)}^{[a-b-p^j]}D_H({\delta(x)})^{p^j-1}(g)\Big)\bigg),
\end{aligned}
\end{equation}
where $(a-b)_j$ is the $j$-th coefficient in the $p$-adic
expansion (cf. \eqref{defofa_i}) of $(a-b)$.

According to \eqref{defofkappa} and \eqref{final3},
\eqref{final2} follows from the following two results (\,$\forall~1 \leq b \leq p^r-p^i~\text{and}~\forall~ 0\leq i\leq r-1$):
\begin{eqnarray}
\kappa\big(\delta(x)^{[p^r-p^i-b]}\big)\cdot (d\varphi_b)_{\delta(x)}
(x_1^{p-1}\cdots x_n^{p-1})=0,&&
\label{final4}\\[0.1cm]
D_H({\delta(x)})^{p^j-1}
(x_1^{p-1}\cdots x_n^{p-1})\in \mathfrak m.&&\forall~p^j \leq p^r-p^i-b.\label{final5}
\end{eqnarray}

Firstly, we prove \eqref{final4}.
Indeed, by \cite[Proposition\,3.2]{S}, we have
$$(d \varphi_b)_{\delta(x)}(x_1^{p-1}\cdots x_n^{p-1})=\varphi_1\big(\delta(x)^{[b-1]}x_1^{p-1}\cdots x_n^{p-1}\big).$$
If $p \nmid (b-1)$, then $\delta(x)^{[b-1]}\in \mathfrak{m}$ as $\delta(x)\in \mathfrak{m}$. And then
$$(d \varphi_b)_{\delta(x)}(x_1^{p-1}\cdots x_n^{p-1})=\varphi_1(0)=0.$$
If $p \,|\, (b-1)$, assume that $mp=b-1$, i.e., $b=mp-1$.
In this case we have that $p \nmid (p^r-p^i-b=p^r-p^i-mp-1)$ for any $0\leq i\leq r-1.$
Thus $\delta(x)^{[p^r-p^i-b]}\in \mathfrak{m}$,
i.e., $\kappa(\delta(x)^{[p^r-p^i-b]})=0.$
Therefore, \eqref{final4} is true.

It remains to prove \eqref{final5}.
Note that $D_H(\delta(x))^{p^j-1}=x^{p^j-1}=\prod\limits_{i=0}^{j-1}(x^{p^i})^{p-1}$. Thus
$$D_H\big(\delta(x)\big)^{p^j-1}(g)=\prod\limits_{i=0}^{j-1}D_H\big(\delta(x)^{[p^i]}\big)^{p-1}(x_1^{p-1}\cdots x_n^{p-1}).$$

Since $\deg D_H\big(\delta(x)\big)^{p^j-1}(x_1^{p-1}\cdots x_n^{p-1}) = \sum\limits_{i=1}^{j-1}(p-1)\deg \delta(x)^{[p^i]}+n(p-1)-2j(p-1)>0$,
thus $D_H(\delta(x))^{p^j-1}(g) \in \mathfrak{m}$.
The proof is complete.
\end{proof}

\section{The invariant polynomial ring of $H_n$}
Throughout this section, denote for short by $G_W=\Aut(W_r)$, $G_B$ the group of
 automorphisms of both the associative and
 the Lie algebra structures on $B_n$,
 and $G_H=\Aut(H_n)$ the automorphism group of $H_n$.
 In this section, we aim to prove Theorem \ref{main2}
 by showing an injective between $k[H_{2r}]^{G_H}$ and $k[W_r]^{G_W}$,
 then we get the result due to the commutative diagram (Figure \ref{commutative-diagram}).

It is well-known that
$$\{\mu \in \Aut B_n\,|\, \mu(\omega_H)\in k^*\omega_H\}\cong G_H,  \quad\text{and} \quad\{\mu \in \Aut B_n\,|\, \mu(\omega_H)=\omega_H\}=G_B  ,$$
where the isomorphism for $H_n$ is given by $\mu \mapsto \Ad_{\mu}$.
For any $D=\sum\limits_{i=1}^n f_i\d_i\in W_n$, the action of $\Ad_{\mu}$ on $W_n$ is
$$\Ad_{\mu}(D)=\sum\limits_{i,k=1}^n\Big(f_i\cdot \frac{\d \mu(x_k)}{\d x_i}\Big)\big(\mu^{-1}(x_1),\cdots,\mu^{-1}(x_n)\big)\d_k.$$

Now we consider the map
\begin{equation}\label{definiton-of-beta}
  \beta=D_H\circ \theta_r:\, W_r\longrightarrow H_n,
\end{equation}
where $\theta_r$ is defined by
$$\theta_r:\, W_r \longrightarrow \,B_n, \quad \sum\limits_{i=1}^r f_i\d_i \mapsto
\sum\limits_{i=1}^r x_if_i(x_{r+1},\cdots,x_{2r}).$$
It is proved  in \cite[Lemma\,4.3]{BFS} that $\beta$ is an injective homomorphism of restricted Lie algebras.
By pulling-back, $\beta$ induces a morphism
$$\beta^*:\, k[H_n] \longrightarrow k[W_r],\qquad
\beta^*(\rho)=\rho \circ \beta.$$

\begin{proposition}\label{propiso}
$\beta^*$ induces an injective morphism
\begin{equation}
\wt\beta=\beta^*|_{k[H_n]^{G_H}} :\, k[H_n]^{G_H} \longrightarrow \,k[W_r]^{G_W}.
\end{equation}

\end{proposition}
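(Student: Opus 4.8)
Two things must be checked: that $\beta^{*}$ sends $k[H_n]^{G_H}$ into $k[W_r]^{G_W}$, so that $\widetilde{\beta}$ is defined at all, and that the resulting map is injective. For the first, the plan is to show that $\beta$ intertwines a suitable lift of automorphisms. Since $G_W=\Aut(W_r)\cong\Aut(B_r)$ via $\nu\mapsto\Ad_{\nu}$, it suffices to attach to each $\nu\in\Aut(B_r)$ an element $\mu=\mu_{\nu}$ of $G_B$ (i.e.\ $\mu\in\Aut(B_n)$ with $\mu(\omega_H)=\omega_H$) such that $\Ad_{\mu}\circ\beta=\beta\circ\Ad_{\nu}$. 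One constructs $\mu$ as the cotangent lift of $\nu$ adapted to the variable splitting used in the definition of $\theta_r$: $\mu$ acts on $x_{r+1},\dots,x_{2r}$ through $\nu$ (equivalently $\nu^{-1}$, depending on conventions), while $\mu(x_1),\dots,\mu(x_r)$ are the $k[x_{r+1},\dots,x_{2r}]$-linear combinations of $x_1,\dots,x_r$ prescribed by the classical cotangent-lift recipe, whose coefficient matrix is built from the Jacobian of $\nu$ so that $\mu(\omega_H)=\omega_H$. Then $\mu$ is a well-defined automorphism of $B_n$ — invertibility holds because the Jacobian of $\nu$ is invertible modulo $\mathfrak{m}$, and the relations $\mu(x_i)^{p}=0$ are automatic since $\mu(x_i)$ is $k[x_{r+1},\dots,x_{2r}]$-linear in $x_1,\dots,x_r$ — it preserves $\theta_r(W_r)$, and since $\mu\in G_B$ it carries $D_H(f)$ to $D_H(\mu(f))$; a short computation with $\theta_r$ then gives $\mu(\theta_r(D))=\theta_r(\Ad_{\nu}D)$ for all $D\in W_r$, whence $\Ad_{\mu}\circ\beta=\beta\circ\Ad_{\nu}$. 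Consequently, for $\rho\in k[H_n]^{G_H}$ and any $g=\Ad_{\nu}\in G_W$,
$$\beta^{*}(\rho)\circ g=\rho\circ\beta\circ\Ad_{\nu}=\rho\circ\Ad_{\mu}\circ\beta=(\rho\circ\Ad_{\mu})\circ\beta=\rho\circ\beta=\beta^{*}(\rho),$$
so $\beta^{*}(\rho)\in k[W_r]^{G_W}$ and $\widetilde{\beta}$ is well defined.

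For injectivity, suppose $\rho\in k[H_n]^{G_H}$ with $\widetilde{\beta}(\rho)=\rho\circ\beta=0$; then $\rho$ vanishes on the linear subspace $\beta(W_r)\subseteq H_n$. The key point is that $\beta(W_r)$ contains the generic torus $T_H$: for $1\le j\le r$,
$$\beta\big((1+x_j)\d_j\big)=D_H\big(\theta_r((1+x_j)\d_j)\big)=D_H\big(x_j+x_jx_{r+j}\big)=(1+x_{r+j})\d_{r+j}-x_j\d_j,$$
so $\beta$ maps the standard torus $T_0=\langle(1+x_1)\d_1,\dots,(1+x_r)\d_r\rangle$ of $W_r$ onto $T_H$. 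Since $\rho$ is $G_H$-invariant and vanishes on $\beta(W_r)$, it vanishes on $G_H\cdot\beta(W_r)\supseteq G_H\cdot T_H$; because $T_H$ is a generic torus, $G_H\cdot T_H$ is Zariski dense in $H_n$ (equivalently, the action morphism $G_H\times T_H\to H_n$ is dominant), and therefore $\rho=0$. Hence $\widetilde{\beta}$ is injective.

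The main obstacle is the equivariance step: the construction of $\mu_{\nu}$ and the verification of $\Ad_{\mu_{\nu}}\circ\beta=\beta\circ\Ad_{\nu}$ must be carried out carefully inside the truncated ring $B_n$, keeping track of how $D_H$ and $\omega_H$ transform under a symplectomorphism and of the invertibility of Jacobians modulo $\mathfrak{m}$. By contrast the injectivity argument is soft once the density of $G_H\cdot T_H$ in $H_n$ is available; if this is not directly part of the generic-torus package of \cite{BFS}, it can be obtained in Premet's style by exhibiting a single element of $\beta(W_r)$ — for instance $D_H(u)$, where $u$ is the element of Section~2, which indeed lies in $\theta_r(W_r)$ — at which the differential of the action map $G_H\times\beta(W_r)\to H_n$ is surjective.
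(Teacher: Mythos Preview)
Your treatment of the first half (well-definedness of $\widetilde{\beta}$) follows the same strategy as the paper: lift each $\nu\in\Aut(B_r)$ to an automorphism $\widetilde{\mu}\in G_H$ via a cotangent-type construction on $B_n$, and check the intertwining relation $\Ad_{\widetilde{\mu}}\circ\beta=\beta\circ\Ad_{\nu}$. The paper's lift actually lands in $G_H$ with $\widetilde{\mu}(\omega_H)=\alpha\,\omega_H$ (where $\alpha$ is the Jacobian determinant of $\nu$ modulo $\mathfrak{m}$) rather than in $G_B$, so the formula $\Ad_{\widetilde{\mu}}(D_H(f))=\alpha^{-1}D_H(\widetilde{\mu}(f))$ carries an extra scalar; this is cosmetic and your variant can be made to work the same way.

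The injectivity argument, however, has a genuine gap. You reduce everything to the claim that $G_H\cdot T_H$ is Zariski dense in $H_n$, but this is exactly the nontrivial point: density of $G_H\cdot T_H$ immediately gives injectivity of the restriction map $r_1:k[H_n]^{G_H}\to k[T_H]^{\mathrm{GL}_r(\mathbb{F}_p)}$, which is precisely what the paper \emph{derives} from the present proposition via the commutative diagram with $k[W_r]^{G_W}$. The result in \cite{BFS} quoted here gives only surjectivity of $r_1$, not density; so invoking ``the generic-torus package'' at this step is circular. Your fallback---surjectivity of the differential of $G_H\times\beta(W_r)\to H_n$ at $D_H(u)$---amounts to checking $[\mathrm{Lie}(G_H),D_H(u)]+\beta(W_r)=H_n$, which you do not verify and which is not obviously easier than the original problem.

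The paper's route for injectivity is different and more concrete. It uses Skryabin's canonical-form set
\[
S_0=\Bigl\{\lambda+u+(-1)^{r-1}h\,x_{r+1}^{p-1}\cdots x_{2r}^{p-1}\ \big|\ \lambda\in k,\ h\in\mathfrak{m}\cap k[x_1,\dots,x_r]\Bigr\},
\]
together with the fact that every $f\in U$ has a unique $G_B$-conjugate in $S_0$. Thus for any $D_H(f)\in V$ there is $s\in S_0$ with $D_H(s)\in G_H\cdot D_H(f)$. Writing $s=u+\bigl(\sum_i\lambda_ix_i+h'\bigr)x_{r+1}^{p-1}\cdots x_{2r}^{p-1}$ with $\deg h'\ge 2$ and applying the scaling automorphism $x_i\mapsto ax_i$, $x_{r+i}\mapsto x_{r+i}$, one lets $a\to 0$ to see that
\[
D_H\!\Bigl(u+\sum_{i=1}^r\lambda_i x_i\prod_{j=1}^r x_{r+j}^{p-1}\Bigr)\in\overline{G_H\cdot D_H(f)},
\]
and this limit element visibly lies in $\beta(W_r)$. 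Hence any $G_H$-invariant $\rho$ vanishing on $\beta(W_r)$ vanishes on the open dense set $V$, so $\rho=0$. This degeneration argument is the substance you are missing; your density statement, if proved, would yield a cleaner argument, but as written it assumes what needs to be shown.
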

We leave the proof of Proposition \ref{propiso} at the end of this section.

\begin{proof}[Proof of Theorem {\rm\ref{main2}}]
Let
$$\begin{aligned}
T_W&=\langle (1+x_1)\d_1,\,\cdots,\,(1+x_r)\d_r \rangle \in W_r;\\
T_H&=\langle (1+x_{r+1})\d_{r+1}-x_1\d_1,
\,\cdots,\,(1+x_{2r})\d_{2r}-x_r\d_r \rangle \in H_n;
\end{aligned}
$$
be the generic tori defined in \cite{BFS}.
Then the map $\beta$ induces an isomorphism between $T_W$ and $T_H$.
Let us  consider the following diagram:
\begin{figure}
\begin{center}\mbox{ }
\xymatrix{
      k[H_n]^{G_H} \ar@{->}[d]^-{r_1}  \ar@{->}[r]^-{\wt\beta}
      &   k[W_r]^{G_W}  \ar@{->}[d]^-{r_2}\\
       ~~~~k[T_H]^{{\rm GL}_{r}(\mathbb{F}_{p})} \ar@{->}[r]^{\beta^*_{T_H}}
      &   k[T_W]^{{\rm GL}_{r}(\mathbb{F}_{p})}  }
\end{center}\vspace{-0.3cm}
\caption{commutative diagram \label{commutative-diagram}}
\end{figure}
Here, $r_1,r_2$ are both restriction maps; and $\beta^*_{T_H}$ is the restriction of $\beta^*$ on $k[T_H]^{{\rm GL}_{r}(\mathbb{F}_{p})}$. Clearly, the map $\beta^*_{T_H}$ is an isomorphism.

It is clear that it is a commutative diagram.
From \cite[Theorem\,6.12]{BFS} it follows that the map $r_1$ is surjective.
It remains to show that $r_1$ is injective.

By Proposition \ref{propiso}, $\wt\beta:\,k[H_n]^{G_H} \to k[W_r]^{G_W} $ is injective.
Hence the map $r_1$ is injective, because $r_2$ is an isomorphism \cite{P2}.

Note that, $k[T_H]^{{\rm GL}_{r}(\mathbb{F}_{p})}$ is generated by $\xi_0|_{T_H},\xi_1|_{T_H},\cdots,\xi_{r-1}|_{T_H}.$ Therefore, $k[H_n]^{G_H}$ is generated by $\xi_0,\xi_1,\cdots,\xi_{r-1}.$
\end{proof}
 \begin{remark}
Actually, the map $\wt\beta$ is an isomorphism.

  \end{remark}

Before going to prove Proposition \ref{propiso},
we first show that
\begin{equation}\label{contained}
  \wt\beta\left(k[H_n]^{G_H}\right) \subseteq  k[W_r]^{G_W}.
\end{equation}

To this end, it is enough to construct a map $G_W \rightarrow G_H:\,\mu \mapsto \wt\mu$, such that $\wt{\mu^{-1}}=(\wt\mu)^{-1}$ and for any $D\in W_r$
  \begin{equation}\label{induced-automorphism}
    \beta\big(\Ad_{\mu}(D)\big)=\Ad_{\wt\mu}\big(\beta(D)\big).
  \end{equation}
If \eqref{induced-automorphism} is true, then for any $\rho\in k[H_n]^{G_H},$
\begin{eqnarray*}
  \Big(\Ad_{\mu}\big(\wt\beta(\rho)\big)\Big)(D)&=&\rho\Big(\beta\big(\Ad_{\mu^{-1}}(D)\big)\Big)
=\rho\Big(\Ad_{\wt{\mu}^{-1}}\big(\beta(D)\big)\Big)\\
 &=&\big(\Ad_{\wt\mu}(\rho)\big)\big(\beta(D)\big)=\rho\big(\beta(D)\big)
=\wt\beta(\rho)(D).
\end{eqnarray*}
This means that $\wt\beta(\rho)\in k[G_W]^{G_W}.$

Let's go to prove \eqref{induced-automorphism}.
For any $\Ad_{\mu} \in G_W$ with $\mu\in \Aut B_r$,
let
$$\det\left(\left(\frac{\d \mu(x_j)}{\d x_i}\right)_{ij}\right)\equiv \alpha ~\mod{\mathfrak{m}}.$$
Then we define $\wt\mu \in \Aut B_n$ as follows.
\begin{eqnarray*}
\wt\mu(x_i)&=&\alpha \sum\limits_{k=1}^r x_k\left(\frac{\d\mu(x_k)}{\d x_i}\right)(\wt\mu(x_{r+1}),\cdots,\wt\mu(x_{2r}));\\
\wt\mu(x_{i+r})&=&\mu^{-1}(x_i)(x_{r+1},\cdots,x_{2r});\text{~~for~}1\leq i\leq r.
\end{eqnarray*}
It can be easily checked that $\wt{\mu^{-1}}(\wt\mu(x_i))=x_i$ for any $1\leq i \leq 2r.$
Then $$\wt{\mu^{-1}}=(\wt\mu)^{-1}.$$
\begin{lemma}\label{H-automorphism}
The automorphism $\Ad_{\wt\mu}$ belongs to $G_H$.
\end{lemma}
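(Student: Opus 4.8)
The plan is to invoke the characterization recalled just above the statement: $\Ad_{\wt\mu}$ lies in $G_H=\Aut(H_n)$ as soon as $\wt\mu$ is an automorphism of the algebra $B_n$ with $\wt\mu(\omega_H)\in k^*\omega_H$, where $\wt\mu$ acts on differential forms by $\wt\mu(f\,dg)=\wt\mu(f)\,d\wt\mu(g)$. That $\wt\mu$ is a well-defined algebra automorphism is routine: every $\wt\mu(x_i)$ visibly lies in $\mathfrak m$ (for $i\le r$ each summand carries a factor $x_k$ with $k\le r$, and for $i>r$ the element $\mu^{-1}(x_{i-r})(x_{r+1},\dots,x_{2r})$ has no constant term because $\mu\in\Aut B_r$), so $\wt\mu$ extends to a $k$-algebra endomorphism of $B_n$, which is bijective since $\wt{\mu^{-1}}$ is a two-sided inverse by the computation made just before the statement. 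Hence the whole content of the lemma is the evaluation of
\[
\wt\mu(\omega_H)=\sum_{i=1}^{r}d\bigl(\wt\mu(x_i)\bigr)\wedge d\bigl(\wt\mu(x_{i+r})\bigr).
\]

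Write $z:=(z_1,\dots,z_r)$ with $z_j:=\wt\mu(x_{r+j})=\mu^{-1}(x_j)(x_{r+1},\dots,x_{2r})$; these are polynomials in $x_{r+1},\dots,x_{2r}$ only. The observation that makes the proof go through is that the substitution $x_j\mapsto z_j$ is the algebra homomorphism $\iota\circ\mu^{-1}\colon B_r\to B_n$, where $\iota\colon B_r\to B_n$ sends $x_j$ to $x_{r+j}$; hence for each $k$,
\[
\mu(x_k)(z)=\iota\bigl(\mu^{-1}(\mu(x_k))\bigr)=\iota(x_k)=x_{r+k}.
\]
Differentiating this with the chain rule gives the identity on which everything hinges:
\[
\sum_{i=1}^{r}\frac{\partial\mu(x_k)}{\partial x_i}(z)\,dz_i\;=\;d\bigl(\mu(x_k)(z)\bigr)\;=\;dx_{r+k}\qquad(1\le k\le r).
\]

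Now I would expand $\wt\mu(\omega_H)$. Since $d(\wt\mu(x_{i+r}))=dz_i$ and, from the defining formula,
\[
d\bigl(\wt\mu(x_i)\bigr)=\alpha\sum_{k=1}^{r}\frac{\partial\mu(x_k)}{\partial x_i}(z)\,dx_k+\alpha\sum_{k=1}^{r}x_k\,d\!\left(\frac{\partial\mu(x_k)}{\partial x_i}(z)\right),
\]
collecting terms gives $\wt\mu(\omega_H)=\alpha\sum_{k}dx_k\wedge\bigl(\sum_i\frac{\partial\mu(x_k)}{\partial x_i}(z)\,dz_i\bigr)+\alpha\sum_{k}x_k\bigl(\sum_i d(\frac{\partial\mu(x_k)}{\partial x_i}(z))\wedge dz_i\bigr)$. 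By the identity above, the first sum is $\alpha\sum_k dx_k\wedge dx_{r+k}=\alpha\,\omega_H$, while the inner sum of the second term equals $d\bigl(\sum_i\frac{\partial\mu(x_k)}{\partial x_i}(z)\,dz_i\bigr)=d(dx_{r+k})=0$, so the second term drops out. Thus $\wt\mu(\omega_H)=\alpha\,\omega_H$, and $\alpha\in k^*$ because it is the determinant of the Jacobian of $\mu$ modulo $\mathfrak m$, i.e.\ of the linear part of $\mu$, which is invertible as $\mu\in\Aut B_r$. Therefore $\wt\mu(\omega_H)\in k^*\omega_H$, i.e.\ $\Ad_{\wt\mu}\in G_H$.

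There is no deep obstacle here; the only point needing care is recognizing that $x_j\mapsto\wt\mu(x_{r+j})$ equals $\iota\circ\mu^{-1}$ — this is exactly what forces the cross term to telescope to $\omega_H$ and makes the remaining term (a priori a nonzero $2$-form in $dx_{r+1},\dots,dx_{2r}$) equal to $d$ of an exact $1$-form, hence zero. Conceptually, the formulas defining $\wt\mu$ are a cotangent lift of $\mu^{-1}$ — the ``position'' variables $x_{r+j}$ transform by $\mu^{-1}$ and the ``momentum'' variables $x_j$ by the inverse-transpose Jacobian, rescaled by $\alpha$ — and the computation above is the standard fact that such a lift multiplies the symplectic form $\omega_H$ by the scalar $\alpha$.
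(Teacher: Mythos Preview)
Your proof is correct and follows essentially the same route as the paper: both compute $\wt\mu(\omega_H)$ directly and reduce it to $\alpha\,\omega_H$ via the identity $\mu(x_k)\bigl(\mu^{-1}(x_1),\dots,\mu^{-1}(x_r)\bigr)=x_k$ (your $\mu(x_k)(z)=x_{r+k}$). The only difference is cosmetic: the paper expands everything in coordinates and kills the residual $dx_{r+j}\wedge dx_{r+l}$ terms by the explicit symmetry $\Phi_{jl}=\Phi_{lj}$ of second partials, whereas you package the same cancellation as $d\bigl(\sum_i\tfrac{\partial\mu(x_k)}{\partial x_i}(z)\,dz_i\bigr)=d(dx_{r+k})=0$, which is the same symmetry in the guise of $d^2=0$.
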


\begin{proof}
 We have to show that $\wt\mu(\omega_H)\in k^*\omega_H$. As
 \begin{eqnarray*}
  \wt\mu(\omega_H)&=& \sum\limits_{i=1}^r d\wt\mu(x_i)\wedge d\wt\mu(x_{i+r})\\
  &=& \alpha \bigg(\sum\limits_{k,i=1}^r\left(\frac{\d\mu(x_k)}{\d x_i}\right)\big(\wt\mu(x_{r+1}),\cdots,\wt\mu(x_{2r})\big) dx_k+\sum\limits_{k,i,t,j=1}^r x_k\left(\frac{\d^2\mu(x_k)}{\d x_t\d x_i}\right)\\
  &&\circ\big(\wt\mu(x_{r+1}),\cdots,\wt\mu(x_{2r})\big)\left(\frac{\d \mu^{-1}(x_t)}{\d x_j}\right)(x_{r+1},\cdots,x_{2r}) d x_{r+j}\bigg)\\
  && \wedge \sum\limits_{l=1}^r\left(\frac{\d \mu^{-1}(x_i)}{\d x_l}(x_{r+1},\cdots,x_{2r})d x_{r+l}\right)\\
  &=&\alpha \sum\limits_{k,l=1}^r \frac{\d \Big(\mu(x_k)\big(\mu^{-1}(x_1),\cdots,\mu^{-1}(x_n)\big)\Big)}{\d x_l}(x_{r+1},\cdots,x_{2r}) dx_k \wedge dx_{r+l}\\
  &&+\alpha \sum\limits_{1\leq j< l\leq r}(\Phi_{jl}-\Phi_{lj})dx_{j+r}\wedge dx_{l+r},
 \end{eqnarray*}
 where
$$\Phi_{jl}=\sum\limits_{k,i,t}^r x_k\left(\frac{\d^2\mu(x_k)}{\d x_t\d x_i}\right)\big(\wt\mu(x_{r+1}),\cdots,\wt\mu(x_{2r})\big)\left(\frac{\d \mu^{-1}(x_t)}
{\d x_j}\frac{\d \mu^{-1}(x_i)}{\d x_l}\right)(x_{r+1},\cdots,x_{2r}).$$
 Note that $\mu(x_k)(\mu^{-1}(x_1),\cdots,\mu^{-1}(x_n))=x_k$ and $\Phi_{jl}=\Phi_{lj}$, thus
 \begin{equation}\label{coefficent-of-automorphism}
   \wt\mu(\omega_H)=\alpha \omega_H.
 \end{equation}
 Therefore, our lemma is proved.
\end{proof}
Recall the relation between the groups $G_H$ and $G_B$ in \cite{St}.
\begin{theorem}\cite[Theorem\,7.3.6]{St}\label{automorphism-relation}
For $\mu\in \Aut B_n$ the following assertions are equivalent.
\begin{itemize}
\item[(a)] $\Ad_{\mu}\in G_H$.
\item[(b)] There is $\alpha\in k^*$ such that $[\mu(x_i),\mu(x_j)]=\alpha \sigma(i)\delta_{i'j}$, for all $f,g\in B_n$, where $\sigma(i)=1$ if $i\leq r$ and -1 otherwise; $i'=i+r$ if $i\leq r$ and equals $i-n$ otherwise.
\item[(c)] There is $\alpha\in k^*$ such that $[\mu(f),\mu(g)]=\alpha\mu[f,g]$, for all $f,g\in B_n$.
\end{itemize}
If (b) or (c) holds, then $\Ad_{\mu}(D_H(f))=D_H(\alpha^{-1}\mu(f))$ holds for all $f\in B_n$.
\end{theorem}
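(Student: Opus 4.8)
The plan is to establish the cycle of implications (a)$\Rightarrow$(c)$\Rightarrow$(b) together with (b)$\Rightarrow$(c)$\Rightarrow$(a) — in practice (a)$\Rightarrow$(c), then (c)$\Leftrightarrow$(b), then (c)$\Rightarrow$(a) — and to extract the concluding identity $\Ad_\mu(D_H(f))=D_H(\alpha^{-1}\mu(f))$ as the engine of the step (a)$\Rightarrow$(c). By the isomorphism $G_H\cong\{\mu\in\Aut B_n\mid \mu(\omega_H)\in k^*\omega_H\}$ recalled before the statement, condition (a) says exactly that $\mu(\omega_H)=\alpha\,\omega_H$ for some $\alpha\in k^*$, where $\mu$ acts on forms by $dx_j\mapsto d\mu(x_j)$ (this is the action appearing in Lemma \ref{H-automorphism}). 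I will use (a) in this form, and write $\Ad_\mu$ for the induced automorphism $D\mapsto \mu\circ D\circ\mu^{-1}$ of $W_n=\Der B_n$.

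The main work is (a)$\Rightarrow$(c), which I would run through the symplectic description of $D_H$. Over $B_n$ the Kähler differentials $\Omega^1_{B_n}$ are free on $dx_1,\dots,dx_n$ — the relations $x_j^p=0$ differentiate to $p\,x_j^{p-1}dx_j=0$, which is vacuous in characteristic $p$ — so contraction with $\omega_H$ is a $B_n$-linear isomorphism $\Der B_n\xrightarrow{\sim}\Omega^1_{B_n}$, $D\mapsto i_D\omega_H$, and a check on generators from \eqref{dh} gives $i_{D_H(f)}\omega_H=-df$; thus $D_H(f)$ is the unique derivation contracting $\omega_H$ to $-df$. The key input is naturality of contraction under the algebra automorphism $\mu$, namely $\mu(i_D\eta)=i_{\Ad_\mu D}(\mu\eta)$ for every derivation $D$ and form $\eta$, which I would verify on $\eta=dx_j$, $D=\d_i$ (both sides equal $\delta_{ij}$) and extend to $\Omega^2$ by the derivation property of $i_D$ together with the multiplicativity of $\mu$. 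Feeding $D=D_H(f)$ and $\eta=\omega_H$ into naturality and using $\mu(\omega_H)=\alpha\omega_H$ gives $-d\mu(f)=\mu(-df)=i_{\Ad_\mu(D_H(f))}(\alpha\omega_H)$, whence $i_{\Ad_\mu(D_H(f))}\omega_H=-\alpha^{-1}d\mu(f)=i_{D_H(\alpha^{-1}\mu f)}\omega_H$, and nondegeneracy yields the displayed formula $\Ad_\mu(D_H(f))=D_H(\alpha^{-1}\mu(f))$. Evaluating the resulting identity $D_H(\mu f)=\alpha\,\Ad_\mu(D_H(f))$ on $\mu(g)$ and unwinding $\Ad_\mu=\mu\circ(-)\circ\mu^{-1}$ then produces $[\mu f,\mu g]=\alpha\,\mu[f,g]$, i.e.\ (c).

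For (c)$\Leftrightarrow$(b): the implication (c)$\Rightarrow$(b) is the specialization $f=x_i$, $g=x_j$, once one computes $[x_i,x_j]=\sigma(i)\delta_{i'j}$ directly from \eqref{dh} (these brackets are scalars, fixed by $\mu$). For (b)$\Rightarrow$(c) I would observe that the two bilinear maps $B_1(f,g)=[\mu f,\mu g]$ and $B_2(f,g)=\alpha\,\mu[f,g]$ obey the same twisted Leibniz rule in each argument, $B(fh,g)=\mu(f)\,B(h,g)+\mu(h)\,B(f,g)$, which follows from the biderivation property of the Poisson bracket and the multiplicativity of $\mu$. Two such maps that agree on the generators $x_1,\dots,x_n$ must coincide on all of $B_n$, because the twisted Leibniz rule reduces the evaluation of $B_1$ and $B_2$ on monomials to their values on pairs of generators, which agree by (b); hence (c).

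Finally I would close the cycle with (c)$\Rightarrow$(a). Running the computation of the second paragraph backwards, (c) yields $\Ad_\mu(D_H(f))=D_H(\alpha^{-1}\mu(f))$, so $\Ad_\mu$ carries $\im D_H=H_n'$ into itself, hence preserves its derived algebra $[H_n',H_n']=H_n$ and restricts to an automorphism of $H_n$; that is, $\Ad_\mu\in G_H$ (equivalently, the naturality identity read in reverse forces $\mu(\omega_H)=\alpha\omega_H$). The displayed identity has then been proved under (a), hence under each of the equivalent conditions. I expect the main obstacle to be the step (a)$\Rightarrow$(c): one must install the contraction isomorphism and its naturality carefully in the truncated, characteristic-$p$ setting and keep precise track of the scalar — $\omega_H$ scales by $\alpha$, so the Hamiltonian field and the bracket pick up the compensating factors — whereas the combinatorial step (b)$\Rightarrow$(c) is routine by comparison.
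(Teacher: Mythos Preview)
The paper does not supply a proof of this theorem: it is quoted verbatim from \cite[Theorem\,7.3.6]{St} and used as a black box (only the final consequence $\Ad_\mu(D_H(f))=D_H(\alpha^{-1}\mu(f))$ is invoked, in Corollary~\ref{action-on-hn}). So there is no ``paper's own proof'' to compare against.

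That said, your argument is correct and self-contained. The symplectic-form approach to (a)$\Rightarrow$(c) is the natural one: the identity $i_{D_H(f)}\omega_H=-df$ together with naturality of contraction under $\mu$ and the hypothesis $\mu(\omega_H)=\alpha\omega_H$ cleanly gives $\Ad_\mu(D_H(f))=D_H(\alpha^{-1}\mu f)$, from which (c) follows by evaluating on $\mu(g)$. Your (c)$\Leftrightarrow$(b) via the twisted biderivation identity is the standard reduction to generators, and (c)$\Rightarrow$(a) by showing $\Ad_\mu(H_n')=H_n'$ and hence $\Ad_\mu(H_n)=H_n$ is exactly what ``$\Ad_\mu\in G_H=\Aut(H_n)$'' demands. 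Two minor remarks: first, the formula $i'=i-n$ in the statement is a typo in the paper (it should be $i'=i-r$), and your computation of $[x_i,x_j]$ implicitly uses the correct value; second, for (c)$\Rightarrow$(a) you could alternatively recover $\mu(\omega_H)=\alpha\omega_H$ directly by reversing the contraction computation, which matches the description of $G_H$ given just before the theorem --- but your route through preservation of $H_n$ is equally valid and arguably more to the point.
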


For our induced automorphism $\Ad_{\wt\mu}$, we have the following corollary.
\begin{corollary}\label{action-on-hn}
  For any $D_H(f)\in H_n$, we have $\Ad_{\wt\mu}(D_H(f))=\alpha^{-1}D_H(\wt\mu(f))$.
\end{corollary}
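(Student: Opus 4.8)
The plan is to read this off directly from Theorem~\ref{automorphism-relation}, using Lemma~\ref{H-automorphism} and equation~\eqref{coefficent-of-automorphism} to verify the hypothesis. First I would note that $\wt\mu\in\Aut B_n$ by its very construction, and that Lemma~\ref{H-automorphism} combined with \eqref{coefficent-of-automorphism} says precisely that $\wt\mu(\omega_H)=\alpha\,\omega_H\in k^*\omega_H$, so $\Ad_{\wt\mu}\in G_H$; that is, condition~(a) of Theorem~\ref{automorphism-relation} holds for $\mu=\wt\mu$. Consequently condition~(c) holds as well: there is $\beta\in k^*$ with $[\wt\mu(f),\wt\mu(g)]=\beta\,\wt\mu([f,g])$ for all $f,g\in B_n$.

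The one substantive point is to identify $\beta$ with the scalar $\alpha$ of \eqref{coefficent-of-automorphism}. This comes down to the fact that scaling the form $\omega_H$ by $\alpha$ scales the associated Poisson bracket of pulled-back functions by the same $\alpha$: concretely, taking $f=x_i$ and $g=x_{i+r}$ gives, by \eqref{dh}, $[x_i,x_{i+r}]=D_H(x_i)(x_{i+r})=1$, so $\beta=[\wt\mu(x_i),\wt\mu(x_{i+r})]$, and a direct evaluation of the right-hand side from the explicit formulas for $\wt\mu(x_i)$ and $\wt\mu(x_{i+r})$ — the same computation that underlies \eqref{coefficent-of-automorphism} — returns $\alpha$. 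Hence condition~(b)/(c) of Theorem~\ref{automorphism-relation} holds with the proportionality constant $\alpha$.

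Now the concluding assertion of Theorem~\ref{automorphism-relation} applies and yields $\Ad_{\wt\mu}(D_H(f))=D_H(\alpha^{-1}\wt\mu(f))$ for every $f\in B_n$. Since $D_H\colon B_n\to W_n$ is $k$-linear (it is given by the linear formula~\eqref{dh}), we may pull the scalar out: $D_H(\alpha^{-1}\wt\mu(f))=\alpha^{-1}D_H(\wt\mu(f))$, which is exactly the claimed identity. Everything here is formal once one has Lemma~\ref{H-automorphism} and Theorem~\ref{automorphism-relation}; I expect the only mild obstacle to be the bookkeeping in the middle step, namely confirming that the constant produced by Theorem~\ref{automorphism-relation}(c) is literally the $\alpha$ of \eqref{coefficent-of-automorphism} and not, say, its inverse.
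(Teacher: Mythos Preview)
Your proposal is correct and matches the paper's approach: the paper likewise appeals to Theorem~\ref{automorphism-relation} and pins down the constant by directly computing the bracket $[\wt\mu(x_{r+i}),\wt\mu(x_i)]=-\alpha$ from the explicit formulas for $\wt\mu$, which is exactly the ``bookkeeping'' step you anticipate. The only difference is cosmetic: the paper verifies condition~(b) with constant~$\alpha$ straightaway rather than first passing through~(a).
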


\begin{proof}
Note that
\begin{eqnarray*}
[\wt\mu(x_{r+i}),\wt\mu(x_i)]&=&D_H\big(\wt\mu(x_{r+i})\big)\left(\alpha \sum\limits_{k=1}^r x_k\left(\frac{\d\mu(x_k)}{\d x_i}\right)\big(\wt\mu(x_{r+1}),\cdots,\wt\mu(x_{2r})\big)\right)\\
  &=&-\alpha \sum\limits_{l=1}^r \frac{\d\wt\mu(x_{r+i})}{\d x_{r+l}}\left(\frac{\d\mu(x_l)}{\d x_i}\right)\big(\wt\mu(x_{r+1}),\cdots,\wt\mu(x_{2r})\big)\\
  &=&-\alpha \left(\frac{\d \mu^{-1}(x_{i})\big(\mu(x_1),\cdots,\mu(x_n)\big)}{\d x_{i}}\right)\big(\wt\mu(x_{r+1}),\cdots,\wt\mu(x_{2r})\big).
\end{eqnarray*}
As $\mu^{-1}(x_{i})\big(\mu(x_1),\cdots,\mu(x_n)\big)=x_i$,
then $[\wt\mu(x_{r+i}),\wt\mu(x_i)]=-\alpha.$
And hence by Theorem \ref{automorphism-relation}, we get
$$\Ad_{\wt\mu}\big(D_H(f)\big)=\alpha^{-1}D_H\big(\wt\mu(f)\big).$$
\end{proof}

\begin{proof}[Proof of \eqref{induced-automorphism}]
   For any $\rho\in k[H_n]^{G_H},$
  and any $\Ad_{\mu}\in G_W$, then we have an automorphism $\Ad_{\wt\mu}\in G_H$ defined as above. Hence for any $D=\sum\limits_{i=1}^r f_i\d_i \in W_r$, we have
  \begin{eqnarray*}
    \beta\big(\Ad_{\mu}(f_i\d_i)\big)&=&\beta\left(\sum\limits_{k=1}^r\Big(f_i\frac{\d \mu(x_k)}{\d x_i}\Big)\big(\mu^{-1}(x_1),\cdots,\mu^{-1}(x_r)\big)\d_k\right)\\
    &=&D_H\left(\sum\limits_{k=1}^r x_k\Big(f_i\frac{\d \mu(x_k)}{\d x_i}\Big)\big(\wt\mu(x_{r+1}),\cdots,\wt\mu(x_{2r})\big)\right)\\
    &=& \alpha^{-1} D_H\Big(f_i\big(\wt\mu(x_{r+1}),\cdots,\wt\mu(x_{2r})\big)\wt\mu(x_i)\Big)\\
    \Ad_{\wt\mu}\big(\beta(f_ix_i)\big)&=&\Ad_{\wt\mu}\Big(D_H\big(x_if_i(x_{r+1},\cdots,x_{2r})\big)\Big)\\
    &=& \alpha^{-1} D_H\Big(\wt\mu\big(x_if_i(x_{r+1},\cdots,x_{2r})\big)\Big)
\qquad\text{by~Corollary~\ref{action-on-hn},}\\
    &=& \alpha^{-1}D_H\Big(\wt\mu(x_i)f_i(\wt\mu(x_{r+1}),\cdots,\wt\mu(x_{2r})\Big)
  \end{eqnarray*}
  Thus $\beta(\Ad_{\mu}(f_i\d_i))=\Ad_{\wt\mu}(\beta(f_ix_i))$. By linearity, \eqref{induced-automorphism}
is true.
\end{proof}

To show that $\wt\beta$ is injective, let us first recall some subsets in $H_n$.
Recall that $V$ defined in \eqref{V} is an open subset in $H_n$.
Let
 $$S_0=\left\{\lambda+u+(-1)^{r-1}hx_{r+1}^{p-1}\cdots x_{2r}^{p-1}~|~\lambda\in k,~h\in \mathfrak{m}\cap k[x_1,\cdots,x_r]\right\},$$
  here $u=\sum\limits_{i=1}^r (-1)^{i-1}x_i\prod\limits_{j=1}^{i-1}x_{r+j}^{p-1}$.
  The set $G_B(f)\cap S_0$
  consists of only one element for every $f\in U$ due to \cite[Theorem\,5.2]{S}.
  Then for any $D_H(f)\in V$, there is an $s\in S_0$ such that
  $$D_H(s)\in G_H(D_H(f)).$$
\begin{proof}[Proof of Proposition \ref{propiso}]
We have proved that $\wt\beta(k[H_n]^{G_H})\subseteq k[W_r]^{G_W}$.
Let's start to prove $\wt\beta$ is injective.
If $\wt\beta(\rho)=0$ for $\rho\in k[H_n]^{G_H}$, then $\rho\big(\beta(W_r)\big)=0.$
For any $D_H(f)\in V$, there is an $s\in S_0$ such that $D_H(s)\in G_H(D_H(f))$.
Assume that $s=u+(\sum\limits_{i=1}^r \lambda_i x_i+h')x_{r+1}^{p-1}\cdots x_{2r}^{p-1}$
with $\deg h'\geq 2$ and $\lambda_i\in k$.
Let $\Ad_{\mu}\in G_H$ be that
$\mu(x_i)=ax_i$ and $\mu(x_{r+i})=x_{r+i}$ with $a\in k^*$ for $1\leq i \leq r$.
Then
\begin{eqnarray*}
  \Ad_{\mu}\big(D_H(s)\big)&=&D_H\big(a^{-1}\mu(s)\big)=D_H\Big(u+a^{-1}\mu\big(\sum\limits_{i=1}^r \lambda_i x_i+h'\big)x_{r+1}^{p-1}\cdots x_{2r}^{p-1}\Big)\\
  &=&D_H\Big(u+\big(\sum\limits_{i=1}^r \lambda_i x_i+ah^{''}\big)x_{r+1}^{p-1}\cdots x_{2r}^{p-1}\Big)
\end{eqnarray*}
for some $h^{''}\in \mathfrak{m}\cap k[x_1,\cdots,x_r].$

Now by taking limit as $a$ goes to $0$, it follows that
\begin{eqnarray*}
  D_H\Big(u+\sum\limits_{i=1}^r \lambda_i x_i\prod \limits_{i=1}^rx_{r+i}^{p-1}\Big)&\in & \overline{G_H(D_H(s))} \subseteq \overline{G_H(D_H(f))}
\end{eqnarray*}
 Since $\rho$ is $G_H$-invariant, it must be constant on
closures of orbits. Then
$$\rho(D_H(f))=\rho\left(D_H\Big(u+\sum\limits_{i=1}^r \lambda_i x_i\prod \limits_{i=1}^rx_{r+i}^{p-1}\Big)\right).$$
After this, note that
$$D_H\Big(u+\sum\limits_{i=1}^r \lambda_i x_i\prod \limits_{i=1}^rx_{r+i}^{p-1}\Big)=\beta\Big(\sum\limits_{i=1}^r \big((-1)^{i-1}\prod\limits_{j=1}^{i-1}x_j^{p-1}+\lambda_i\prod\limits_{i=1}^rx_i^{p-1}
\big)\d_i\Big),$$
and $\rho(\beta(W_r))=0$, hence $\rho(D_H(f))=0.$ Therefore, the map $\wt\beta$ is injective.
\end{proof}

\textbf{Acknowledgement} ~~The author would like to thank the Max Planck Institute 
for Mathematics in Bonn for providing the opportunity and good environment to study here.  The author also would like to thank Professor A.A.Premet for discussing his Conjecture with her.

  \vspace{0.2cm}

\vspace{0.5cm}
\end{document}